\documentclass[a4paper,12pt]{article}
\usepackage{amssymb,amsmath,amsthm}
\usepackage{amsfonts}
\usepackage{mathrsfs}
\usepackage{latexsym}
\usepackage{multicol}
\usepackage{graphicx} 

\def\be{\begin{equation}}
\def\ee{\end{equation}}

\def\d'{``}

\parskip=0.5ex
\oddsidemargin= 0.35cm
\evensidemargin= 0.35cm

\parindent=1.5em
\textheight=23.0cm
\textwidth=15.5cm
\topmargin=-1.0cm

\newtheorem{thm}{Theorem}[section]

\newtheorem{propn}[thm]{Proposition}
\newtheorem{lem}[thm]{Lemma}

\newtheorem{conj}[thm]{Conjecture}


\theoremstyle{definition}

\newtheorem{rem}[thm]{Remark}



\def\be{\begin{equation}}
\def\ee{\end{equation}}
\def\bea{\begin{eqnarray}}
\def\eea{\end{eqnarray}}

\def\i'{\textrm{i}}











\def\d'{``}

\begin{document}

\newcommand{\beq}{\begin{equation}}  
\newcommand{\eeq}{\end{equation}}  
\newcommand\la{{\lambda}}   
\newcommand\La{{\Lambda}}   
\newcommand\ka{{\kappa}}   
\newcommand\al{{\alpha}}   
\newcommand\bet{{\beta}} 
\newcommand\gam{{\gamma}}     
\newcommand\om{{\omega}}  
\newcommand\tal{{\tilde{\alpha}}}  
\newcommand\tbe{{\tilde{\beta}}}   
\newcommand\tla{{\tilde{\lambda}}}  
\newcommand\tmu{{\tilde{\mu}}}  
\newcommand\si{{\sigma}}  
\newcommand\lax{{\bf L}}    
\newcommand\mma{{\bf M}}    
\newcommand\rd{{\mathrm{d}}}  
\newcommand\tJ{{\tilde{J}}}  
\newcommand\ri{{\mathrm{i}}} 

\newcommand{\N}{{\mathbb N}}
\newcommand{\Q}{{\mathbb Q}}
\newcommand{\Z}{{\mathbb Z}}
\newcommand{\C}{{\mathbb C}}
\newcommand{\R}{{\mathbb R}}

\title{A Hirota bilinear equation for Painlev\'e transcendents $P_{IV}$, $P_{II}$ and $P_I$.}
\author{A.N.W. Hone\thanks{SMSAS, University of Kent, Canterbury, U.K.}, 
and F. Zullo\thanks{Dipartimento di Ingegneria Meccanica e Aerospaziale, Universit\`a La Sapienza, Roma, Italy.}}

\maketitle

\begin{abstract}
\noindent
We present some observations on the tau-function for the fourth Painlev\'e equation. By considering a Hirota bilinear equation of order four for this tau-function, we describe the general form of the Taylor expansion around an arbitrary movable zero. The corresponding Taylor series for the tau-functions of the first and second Painlev\'e equations, 
as well as that for the 
Weierstrass sigma function, arise naturally as special cases, by setting certain parameters to zero.  
\end{abstract}

\section{Introduction} \label{intro}
The six Painlev\'e equations (denoted $P_{I}-P_{VI}$) can be considered as nonlinear analogues of the classical functions: they admit a Hamiltonian representation \cite{O}, all of them  (apart from $P_I$) possess B\"acklund transformations \cite{C}, and they each arise as a compatibility condition for an associated isomonodromy problem \cite{JM}. 
General solutions of  Painlev\'e equations have asymptotics  
in terms of elliptic functions, which  was originally obtained (for $P_I$ and $P_{II}$)  by Boutroux \cite{Boutroux}. It is also known that through a limiting procedure, usually called the coalescence cascade, it is possible to obtain all the equations $P_{V}-P_{I}$ just from equation $P_{VI}$ (see e.g.  \cite{Ince}). 
Furthermore, the equations $P_I$, $P_{II}$ and $P_{IV}$ share the property that all their local solutions are meromorphic and possess a meromorphic continuation in the whole complex plane \cite{HL}. 

The Hamiltonian functions for $P_I$-$P_{VI}$ are  polynomials $h_j=h_j(q,p,z)$ in the 
canonically conjugate phase space variables 
$q,p$, and are rational in  the independent variable $z$. Letting  a prime denote differentiation with respect to $z$, 
the Hamiltonian formulation allows each of the Painlev\'e equations to be formulated as a first order system, 
\beq\label{ham} 
q'=\frac{\partial h_j}{\partial p}, \qquad p'=-\frac{\partial h_j}{\partial q}, \qquad j=I,...,VI.
\eeq 

The functions $h_j$ themselves, as functions of the time $z$, solve certain differential equations; these functions, defined   by $\si_j(z)=h_j(q(z),p(z),z)$, where $q(z),p(z)$ satisfy (\ref{ham}), are usually called ``sigma functions'' \cite{JM, O}. Every solution of the  Painlev\'e equation can be written in terms of the solution of a corresponding 
differential equation for $\si_j$, which is of second order and second degree. Moreover, 
the sigma function is given in terms of the logarithmic derivative of a tau-function. In some sense, one can view the sigma function or the tau-function as being more fundamental than the solution of the Painlev\'e equation, since in applications 
(such as in the theory of random matrices \cite{fw}) these are usually the main objects of interest.   

In recent work \cite{HRZ} we have shown how the recursive formula for the coefficients in the Laurent series expansion of solutions of the first Painlev\'e equation can be considered as an extension of the analogous formula for the Weierstrass $\wp$ function. In addition, the recursive formulae for the Taylor expansion of the
tau-function around one of its zeros lead to natural extensions of the expressions  found by Weierstrass \cite{w} 
for the elliptic sigma function 
(not to be confused with the sigma function of the Painlev\'e equations). The key to these recursive formulae was the use of a Hirota bilinear equation for the tau-function, 
amenable to the same method that was applied to the elliptic sigma function in \cite{EE}.  

The purpose of this short article is to derive recursive formulae for the expansion of the tau-function of the fourth 
Painlev\'e equation around a movable zero. Bilinear equations for $P_{IV}$ tau-functions have been derived previously, either as a system of two equations relating two tau-functions \cite{hk}, or as a symmetric system involving three 
tau-functions (see e.g. Theorem 3.5 in \cite{noumi}). However, by starting from the equation for the sigma function $\si_{IV}$, 
we can use a single  Hirota bilinear equation of fourth order to obtain the Taylor series expansion of  
the  $P_{IV}$ tau-function 
around a zero. By exploiting the freedom in the definition of $\si_{IV}$, we introduce additional parameters into the 
sigma function equation, and show how the corresponding series solutions for both $P_{II}$ and $P_I$ arise directly from 
the same bilinear equation as degenerate special cases, by setting suitable parameters to zero,  while all of these series can be viewed as natural extensions of 
the elliptic case treated in  \cite{EE}.  

In the next section we briefly review the Hamiltonian formulation of the fourth Painlev\'e equation and  the 
corresponding sigma equation, before introducing 
a ``shifted''  sigma equation (given by (\ref{bsig}) below), which is suitable for studying series expansions around movable poles, as well as the degeneration to 
$P_{II}$, $P_I$ and elliptic functions. 
Section 3 is concerned with  the properties of the tau-function for $P_{IV}$, the corresponding bilinear equation, and the presentation of the main result, namely the recursion for the Taylor coefficients (Theorem \ref{main}). The fourth  
section is devoted to a numerical application of the main result, using it to calculate approximations to the zeros of a particular tau-function for $P_{II}$, and we end with some conclusions and suggestions for future work.   
 
\section{Hamiltonian and sigma equation for $P_{IV}$} 

The fourth Painlev\'e equation can be derived from the Hamiltonian function 
\beq\label{4ham} 
h_{IV}(q,p,z)=\zeta (qp^2-q^2p) +\zeta^{-1}\Big( (e_2-e_3) p +(e_3-e_1)q \Big) +(e_3-\zeta^2qp)z, 
\eeq  
where $\zeta\neq 0$ and $e_j$ for $j=1,2,3$ are parameters. 
The corresponding Hamilton's equations (\ref{ham}) are given explicitly by (hereafter a prime denotes a derivative with respect to $z$)
\beq\label{p4sys}
q' = \zeta q(2p-q) +\zeta^{-1}  (e_2-e_3)-\zeta^2 zq, \qquad 
p'= \zeta p(2q-p) +\zeta^{-1}  (e_1-e_3)+\zeta^2 zp. 
\eeq 
The 
ordinary differential equation of second order  satisfied by $q$ arises by eliminating $p$ from (\ref{p4sys}), to yield 
\beq\label{p4} 
q''=\frac{(q')^2}{2q}+\frac{3}{2}\zeta^2 q^3 +2\zeta^3 zq^2+\Big( \frac{1}{2}\zeta^4 z^2-\al \Big) q +
\frac{\bet}{q}, 
\eeq 
where
$$ \al= e_2+e_3-2e_1+\zeta^2, \qquad  \bet =-\frac{(e_2-e_3)^2}{2\zeta^2},   
$$ 
which (up to rescaling $q$ and $z$) is just the fourth 
 Painlev\'e equation $P_{IV}$. By symmetry, upon eliminating $q$ from (\ref{p4sys}),  it follows that $p$  satisfies 
\beq\label{altp4} 
p''=\frac{(p')^2}{2p}+\frac{3}{2}\zeta^2 p^3 -2\zeta^3 zp^2+\Big( \frac{1}{2}\zeta^4 z^2-\tilde{\al} \Big) p +
\frac{\tilde{\bet}}{p}, 
\eeq 
with 
$$ \tilde{\al}= e_1+e_3-2e_2-\zeta^2, \qquad  \tilde{\bet} =-\frac{(e_1-e_3)^2}{2\zeta^2},   
$$ 
so that $-p$ satisfies the same form (\ref{p4}) of $P_{IV}$ as $q$ does, but for different values of the parameters 
$\al, \bet$.  

There is a certain amount of redundancy in the choice of parameters used above. Although the parameter $\zeta$ appears inessential, as (providing it is non-zero) it can always be removed by rescaling $q,p$ and $z$,  it will be needed in what follows. As for the three quantities $e_j$, $j=1,2,3$,  the solutions of $P_{IV}$ only depend on the differences $e_j-e_k$, but the inclusion of the term $e_3z$ in (\ref{4ham}) shows that the sigma function 
$$ \si_{IV}(z)=h_{IV}(q(z),p(z),z)$$ also depends on the parameter 
\beq\label{mud} 
\mu^* =\frac{e_1+e_2+e_3}{3}. 
\eeq 
Indeed, by taking derivatives of the Hamiltonian with respect to $z$, it follows that the sigma function satisfies 
the following equation of second order and second degree: 
\beq\label{sig} 
(\si_{IV}'')^2-\zeta^4 (z\si_{IV}'-\si_{IV})^2 +4(\si_{IV}'-e_1)(\si_{IV}'-e_2)(\si_{IV}'-e_3)=0.  
\eeq   
Moreover, $q$ and $p$ are given in terms of the solution of the latter equation 
by 
\beq\label{pq} 
q=\frac{\si_{IV}''-\zeta^2  (z\si_{IV}'-\si_{IV})}{2\zeta(\si_{IV}'-e_1)}, 
\qquad p=\frac{\si_{IV}''+\zeta^2  (z\si_{IV}'-\si_{IV})}{2\zeta(\si_{IV}'-e_2)}.  
\eeq 
The freedom to permute $e_1,e_2,e_3$ shows that generically the same solution of (\ref{sig}) provides six 
different solutions of the equation (\ref{p4}), with different $\al$, $\bet$; this  is one manifestation of the affine $A_2$ symmetry for  $P_{IV}$ \cite{O}, which can be seen more easily from its   symmetric form \cite{noumi}. 

Henceforth we regard the sigma equation (\ref{sig}) as the fundamental object 
of interest, and proceed to consider the behaviour of solutions near singularities. Since $q(z)$ and $p(z)$ are both meromorphic for all $z\in\C$ (see e.g. \cite{HL} or \cite{Steinmetz1}), it follows from (\ref{4ham}) that 
$\si_{IV}(z)$ is also a globally meromorphic function, and it is straightforward to see that its only possible singularities are movable simple poles 
with a local  Laurent expansion of the form 
\beq\label{siglaur}
\si_{IV}(z)=\frac{1}{z-z_0}+B + O\Big((z-z_0)\Big), 
\eeq 
where both the pole position $z_0$ and the quantity $B$ (resonance parameter) are arbitrary. For fixed values of the coefficients $e_j$ and $\zeta$, any solution of the second order equation (\ref{sig}) is completely specified by a particular choice of the two values $z_0,B$ in (\ref{siglaur}), which is then determined on the whole complex plane by analytic continuation. 

In order to understand how the solution of (\ref{sig}) depends on the parameters $z_0,B$, it is convenient to shift 
\beq\label{trans}
z\to z+z_0, \qquad \si_{IV}\to \si +B, 
\eeq 
which leads to an equation of the form 
\begin{equation}\label{bsig}
\left(\Sigma '' \right)^2-\eta\left(z\Sigma '-\Sigma\right)^2
+2(\ka\Sigma '-\lambda)\left(z\Sigma '-\Sigma\right)+4(\Sigma')^3-g_2\Sigma '+g_3=0
\end{equation} 
where 
$$ 
\eta = \zeta^4 ,
$$ 
and  for $\mu=\mu^*+\eta z_0^2/12$, the dependent variable $\Sigma$ is given by 
\beq \label{bigsig}
\Sigma (z) =\si (z) -\mu z, 
\eeq 
with  
the parameters $\ka, \la, g_2,g_3$ being polynomials in $\eta,z_0,B$ and the $e_j$. 
Having fixed the pole to lie at $z=0$, and shifted away the parameter $B$, the function $\Sigma(z)$ satisfying (\ref{bsig})  
depends only on the 5 parameters $\eta,\ka,\la,g_2,g_3$, while $\sigma (z)$ depends on $\mu$ also. 
\begin{lem}\label{basic}
For $\zeta\neq 0$, via translations of the form (\ref{trans}),  there 
is a one-to-one correspondence between solutions of (\ref{sig}) 
with a pole at some $z_0\in\C$,  
and functions  
$$\sigma(z)=\Sigma(z) + \mu z$$ 
 with a pole at $z=0$, where $\Sigma (z)$ is the solution of  (\ref{bsig})  specified by the  local Laurent expansion 
\beq\label{bsigexp}
\Sigma(z)=\frac{1}{z}+ O(z^2). 
\eeq 
\end{lem}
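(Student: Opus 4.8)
The plan is to carry out the two changes of variables (\ref{trans}) and (\ref{bigsig}) explicitly, so as to verify simultaneously the shape of the transformed equation, the form of the local expansion at the pole, and the reversibility of the whole construction. First I would substitute $z\to z+z_0$ and $\si_{IV}\to\si+B$ into (\ref{sig}): writing $\hat{\si}(z)=\si_{IV}(z+z_0)$ and then $\hat{\si}=\si+B$, the combination $z\si_{IV}'-\si_{IV}$ is replaced by $(z\si'-\si)+(z_0\si'-B)$, so that expanding $-\zeta^4\big((z\si'-\si)+(z_0\si'-B)\big)^2$ splits it into $-\zeta^4(z\si'-\si)^2$, a cross term $-2\zeta^4(z_0\si'-B)(z\si'-\si)$ that is affine in $\si'$ and linear in $z\si'-\si$, and a remainder $-\zeta^4(z_0\si'-B)^2$ which is a quadratic polynomial in $\si'$. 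Expanding the cubic $4(\si'-e_1)(\si'-e_2)(\si'-e_3)$ as well and collecting terms, the ``pure polynomial in $\si'$'' part of the transformed equation is $4(\si')^3-(12\mu^*+\zeta^4z_0^2)(\si')^2+\cdots=4(\si')^3-12\mu(\si')^2+\cdots$, with $\mu=\mu^*+\eta z_0^2/12$ exactly as in (\ref{mud}) and the statement.

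Next I would set $\Sigma=\si-\mu z$, so that $\Sigma'=\si'-\mu$, $\Sigma''=\si''$, and, crucially, $z\Sigma'-\Sigma=z\si'-\si$ (the term $\mu z$ cancels). Then $4(\si')^3-12\mu(\si')^2=4(\Sigma')^3+(\text{lower-order terms in }\Sigma')$, so the two contributions proportional to $(\Sigma')^2$ cancel, and rewriting the whole equation in terms of $\Sigma$ reproduces precisely (\ref{bsig}), with $\eta=\zeta^4$, $\kappa=-\eta z_0$, $\lambda=\eta(z_0\mu-B)$, and $g_2,g_3$ the polynomials in $\eta,z_0,B$ and the $e_j$ obtained from the residual linear and constant terms in $\Sigma'$. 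Confirming that no extraneous monomial survives is the main (but essentially routine) computational step.

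For the local data, (\ref{siglaur}) shows that after the shift $\si(z)=z^{-1}+O(z)$ (the constant $B$ has been removed), so $\Sigma(z)=z^{-1}+O(z)$ has vanishing constant term. Substituting the generic Laurent series $\Sigma=z^{-1}+a_0+a_1z+\cdots$ into (\ref{bsig}), the $z^{-6}$ contributions of $(\Sigma'')^2$ and $4(\Sigma')^3$ cancel identically (the residue-one balance), while at order $z^{-4}$ the only surviving contribution is the coefficient $12a_1$ coming from $4(\Sigma')^3$, which forces $a_1=0$. Together with $a_0=0$ this gives $\Sigma(z)=z^{-1}+O(z^2)$, as claimed; as a by-product, the linear Laurent coefficient of $\si$ at its pole is seen to equal $\mu$, which is the consistency check that the value of $\mu$ forced by killing the $(\Sigma')^2$ term agrees with the one forced by the expansion.

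Finally, for bijectivity: the maps $\si_{IV}\mapsto\si\mapsto\Sigma$ are manifestly invertible, and since $\eta=\zeta^4\neq0$ one recovers $z_0=-\kappa/\eta$ and then $B$ from $\lambda$, so the data $(\eta,\kappa,\lambda,g_2,g_3)$ together with $\mu$ determine $(z_0,B)$. Combining this with the two uniqueness statements — that, for fixed $\zeta$ and $e_j$, a solution of (\ref{sig}) with a pole at $z_0$ is determined by $(z_0,B)$ via (\ref{siglaur}) and analytic continuation, as recalled in the text, and that a solution of (\ref{bsig}) with Laurent expansion $z^{-1}+O(z^2)$ is unique, being the member of the one-parameter family of solutions with a simple pole at the origin having vanishing resonance coefficient $a_0$ — yields the stated one-to-one correspondence. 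The hypothesis $\zeta\neq0$ enters exactly here, in keeping track of $z_0$ through $\kappa$; the degenerate case $\zeta=0$, where that information is lost, is precisely the elliptic/$P_{II}$/$P_I$ limit studied in the sequel. I expect the only genuine obstacle to be the bookkeeping in the second step, with everything else essentially forced.
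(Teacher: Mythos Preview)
Your proposal is correct and follows essentially the same route as the paper, which does not give a separate proof of the lemma but rather states it as a summary of the preceding discussion of the translations (\ref{trans}) and the substitution (\ref{bigsig}). You have simply made explicit the computations the paper leaves implicit: the expansion of $-\zeta^4\big((z\si'-\si)+(z_0\si'-B)\big)^2$, the cancellation of the $(\Sigma')^2$ term via the choice of $\mu$, and the order-by-order verification that the Laurent expansion reduces to $z^{-1}+O(z^2)$; your identifications $\kappa=-\eta z_0$ and $\lambda=\eta(z_0\mu-B)$ are consistent with the later formula $\tilde\kappa=\kappa-\Omega\eta$ in (\ref{abmk}), and your bijectivity argument via $z_0=-\kappa/\eta$ is exactly the reason the hypothesis $\zeta\neq 0$ is needed.
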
 

\begin{rem} The above result applies to any solution 
of (\ref{sig}) with at least one pole; in particular, this excludes certain trivial 
solutions which are linear in $z$. 
If we scale (\ref{p4}) so that $\zeta=1$, then 
all solutions of $P_{IV}$ which are transcendental, 
meaning  
that they are neither rational nor can be reduced to solutions of a Riccati equation,  have infinitely many simple poles with residue $+1$ and infinitely many with residue $-1$ \cite{GLS}.  The formula (\ref{pq}) shows that 
(for $\zeta=1$) $q$ has a pole with residue $-1$ at places where $\si_{IV}$ has a simple pole, and $q$ 
does not depend on the parameter $\mu$, so its behaviour near such a pole is completely determined by a function $\Sigma$ specified as above. 
Poles of $q$ with residue $+1$ correspond to places where $\si_{IV}$ has a zero with $\si_{IV}'\to e_1$; the  behaviour at such poles can also be determined by using the 
well known 
observation of Okamoto \cite{O} that when $\zeta=1$ every solution of (\ref{p4}) 
can be written as the difference of two Hamiltonians, i.e. 
$$ 
q(z) =\tilde{\si}_{IV}(z) -\si_{IV}(z),
$$ 
where  $\tilde{\si}_{IV}$ satisfies (\ref{sig}) but with suitably shifted parameters. 
\end{rem}

For future reference, we record the equation of third order that results by taking the derivative of (\ref{bsig}) and 
removing a factor of $\Sigma ''$, that is 
\beq\label{third} 
\Sigma'''+6(\Sigma')^2 - z(\eta z-2\ka )\Sigma'+(\eta z-\ka )\Sigma-\la z-\frac{1}{2}g_2=0. 
\eeq 
Clearly the parameter $g_3$ in (\ref{bsig}) is a first integral for the above equation. 

We now consider the  degenerate case $\eta=\zeta^4=0$, which  is no longer related to $P_{IV}$. 

\begin{propn} If $\eta=0$ and $\ka\neq 0$, then 
\beq
v=2\si '=2(\Sigma'+\mu) \qquad 
with
\qquad \mu = -  \la \ka^{-1} 
\eeq 
satisfies the $P_{XXXIV}$ equation in the form 
\beq\label{p34} 
vv''-\frac{1}{2}(v')^2+2v^3 +(\ka z -6\mu ) v^2 +\frac{\ell^2}{2}=0, 
\eeq 
where 
$$ 
\ell^2 = 16\mu^3-4g_2\mu -4g_3, 
$$ 
Thus 
\beq\label{p2sub}
u= \frac{v'+\ell}{2v} 
\eeq 
satisfies the second Painlev\'e equation $P_{II}$ in the form 
\beq\label{p2} 
u''=2u^3+(\ka z -6\mu )u+\ell-\frac{1}{2}\ka, 
\eeq
and conversely $v$ is given in terms of $u$ and its first derivative by 
\beq\label{p34sub}
v=-u'-u^2-\frac{1}{2}(\ka z -6\mu ). 
\eeq 
\end{propn}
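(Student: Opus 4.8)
The plan is to take the degenerate equation (\ref{bsig}) with $\eta=0$ and reduce it step by step, first to $P_{XXXIV}$, then to $P_{II}$, keeping track of how the parameters transform. Setting $\eta=0$ in (\ref{bsig}) leaves
\[
(\Sigma'')^2 + 2(\ka\Sigma'-\la)(z\Sigma'-\Sigma) + 4(\Sigma')^3 - g_2\Sigma' + g_3 = 0 ,
\]
and the natural move is to differentiate and divide by $\Sigma''$, which is exactly the $\eta=0$ specialisation of (\ref{third}), namely
\[
\Sigma''' + 6(\Sigma')^2 + 2\ka z\,\Sigma' - \ka\Sigma - \la z - \tfrac12 g_2 = 0 .
\]
Now I would introduce $v = 2\Sigma' + 2\mu$ with the stated choice $\mu = -\la\ka^{-1}$; the point of this shift is that $\ka\Sigma - \la z = \tfrac{\ka}{2}(v - 2\mu)\cdot(\text{antiderivative stuff})$ — more precisely, writing $\Sigma' = v/2 - \mu$ one gets $-\ka\Sigma - \la z = -\ka\int(\Sigma')\,dz - \la z = -\tfrac{\ka}{2}\int v\,dz + \ka\mu z - \la z = -\tfrac{\ka}{2}\int v\,dz$, so that the third-order ODE becomes $\tfrac12 v'' + \tfrac32 v^2 + \ka z(v/2-\mu) - \tfrac{\ka}{2}\int v\,dz - \tfrac12 g_2 = 0$. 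That still contains an integral, so instead the cleaner route is to avoid integrating: multiply the third-order equation (in $v$) by a suitable factor and recognise a total derivative, or — better — start again from the second-order-second-degree form. The honest approach: substitute $\Sigma' = v/2 - \mu$ directly into the undifferentiated $\eta=0$ equation. Since $\Sigma'' = v'/2$, $z\Sigma' - \Sigma = \int(z\Sigma'' )\,dz$... again an integral appears, so one should instead verify (\ref{p34}) by differentiating it and matching against (\ref{third}).

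Concretely, here is the route I would actually carry out. Differentiate the proposed $P_{XXXIV}$ equation (\ref{p34}) with respect to $z$; using $v = 2\Sigma' + 2\mu$, $v' = 2\Sigma''$, $v'' = 2\Sigma'''$, this should reproduce a multiple of the third-order equation (\ref{third}) with $\eta=0$, up to the overall factor coming from $v$, thereby showing that (\ref{p34}) holds up to an additive constant; the constant is then pinned down to be $\ell^2/2$ with $\ell^2 = 16\mu^3 - 4g_2\mu - 4g_3$ by evaluating both (\ref{p34}) and the undifferentiated $\eta=0$ version of (\ref{bsig}) and comparing, i.e. the first integral $g_3$ of (\ref{third}) gets repackaged into $\ell^2$. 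This is the computational heart of the argument and the main place where care is needed: getting the coefficient of $v^2$ to come out as $\ka z - 6\mu$ (the $-6\mu$ arises precisely because $v^3 = (2\Sigma'+2\mu)^3$ contributes a $12\mu(\Sigma')^2$ cross-term that must cancel against $\ka z\,\Sigma'$-type and $\mu$-linear terms) and verifying that the constant is a perfect combination $16\mu^3 - 4g_2\mu - 4g_3$.

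Next, for the passage from $P_{XXXIV}$ to $P_{II}$: this is the classical Miura-type correspondence. Given $u = (v' + \ell)/(2v)$ as in (\ref{p2sub}), I would compute $u'$, then $u'' = 2u^3 + \dots$ by straightforward differentiation, substituting for $v''$ from (\ref{p34}) wherever a second derivative of $v$ appears, and for $(v')^2$ using $(v')^2 = (2uv-\ell)^2 = 4u^2v^2 - 4\ell uv + \ell^2$. The terms involving $v$ explicitly should organise themselves so that $v$ drops out entirely (this is the miracle of the Miura map), leaving (\ref{p2}) with the shifted time variable $\ka z - 6\mu$ and the parameter $\ell - \tfrac12\ka$ on the right-hand side. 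For the converse formula (\ref{p34sub}), $v = -u' - u^2 - \tfrac12(\ka z - 6\mu)$, one checks directly that with this $v$ the expression $(v'+\ell)/(2v)$ collapses to $u$ using $P_{II}$, and that this $v$ solves (\ref{p34}); equivalently, one notes $v' + 2uv = -(u'' - 2u^3 - (\ka z - 6\mu)u) + \ldots = -\ell + \tfrac12\ka + \ldots$, matching (\ref{p2sub}) up to the algebra.

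The main obstacle I anticipate is purely bookkeeping: tracking the constant term through the differentiate-then-reintegrate step to confirm it is exactly $\ell^2/2$ with the stated $\ell^2$, and ensuring the $\mu$-dependent shifts ($-6\mu$ in the time, $-\tfrac12\ka$ in $P_{II}$) come out with the right signs and coefficients. Everything else is a sequence of routine substitutions; no genuinely new idea is required beyond recognising (\ref{third}) as the derivative of (\ref{bsig}) and applying the standard $P_{XXXIV}$–$P_{II}$ Miura correspondence. I would also double-check the degenerate hypothesis $\ka \neq 0$ is used only to make $\mu = -\la\ka^{-1}$ well-defined (and to ensure $\ka z - 6\mu$ is genuinely the Painlev\'e-II independent variable rather than a constant).
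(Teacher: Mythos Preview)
Your overall plan is sound and the $P_{XXXIV}\to P_{II}$ portion is exactly the standard Miura computation the paper invokes. However, one detail in the $P_{XXXIV}$ derivation is not quite right as stated: differentiating (\ref{p34}) and substituting $v=2\Sigma'+2\mu$ cannot reproduce a multiple of (\ref{third}) itself, because (\ref{third}) contains an undifferentiated $\Sigma$ while (\ref{p34}) and its derivative involve only $\Sigma',\Sigma'',\Sigma''',\Sigma''''$. What actually happens is that
\[
\frac{d}{dz}\Big[vv''-\tfrac12(v')^2+2v^3+(\ka z-6\mu)v^2\Big]
= 2v\cdot\frac{d}{dz}\Big[\text{LHS of (\ref{third}) with }\eta=0\Big],
\]
so the derivative of (\ref{p34}) matches a multiple of the \emph{derivative} of (\ref{third}). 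This still shows the left-hand side of (\ref{p34}) is constant along solutions, so your reintegrate-and-pin-down-the-constant strategy survives; but the extra layer of differentiation is exactly what you were trying to avoid with the earlier false starts.

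The paper bypasses this entirely with a single algebraic combination: multiply (\ref{third}) (with $\eta=0$) by $v/2=\Sigma'-\la\ka^{-1}$ and subtract half of (\ref{bsig}). The undifferentiated $\Sigma$ appears in (\ref{third}) as $-\ka\Sigma$ and in (\ref{bsig}) through $2(\ka\Sigma'-\la)(z\Sigma'-\Sigma)$; after the combination these terms cancel identically, and what remains is (\ref{p34}) already including the constant $\ell^2/2=8\mu^3-2g_2\mu-2g_3$. So the paper's route gets the constant for free in one step, whereas yours needs a separate comparison with (\ref{bsig}) after the differentiate--reintegrate cycle. Both are valid; the paper's is shorter and avoids the fourth derivative of $\Sigma$.
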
  
\begin{proof} If $\eta=0$, then (\ref{bsig}) reduces to the sigma equation for 
$P_{II}$, provided that $\ka\neq 0$.  Upon   multiplying (\ref{third}) by $v/2=\Sigma'-\la\ka^{-1}$ and 
subtracting off half of (\ref{bsig}), 
the terms involving $\Sigma$ are eliminated, and what remains is the equation (\ref{p34}) for $v$, which 
is referred to as $P_{XXXIV}$ in \cite{Ince}. Every solution of (\ref{p34}) gives a solution of (\ref{p2}), and vice-versa, 
according to the formulae  (\ref{p2sub}) and (\ref{p34sub}). 
\end{proof} 
\begin{rem} 
The relations  (\ref{p2sub}) and (\ref{p34sub}) can be rewritten as 
$$ 
v'=\frac{\partial h_{II}}{\partial u}, \quad 
u'=-\frac{\partial h_{II}}{\partial v}, \qquad \mathrm{with} 
\quad h_{II}=u^2v-\ell u+\frac{1}{2}v^2 + \frac{1}{2}(\ka z -6\mu )v,$$  
which is the Hamiltonian formulation of 
$P_{II}$ found in \cite{O}. 
The standard version  of (\ref{p34}), or that of (\ref{p2}), has $\mu=0$. However, 
the situation for $\eta=0$,  $\ka\neq 0$ is 
completely analogous to that in Lemma \ref{basic}: we can use expansions around $z=0$ for $\Sigma$, of the form,  
(\ref{bsigexp}) to obtain local Laurent expansions for the standard version of $P_{XXXIV}$ (or $P_{II}$)  
around a pole in an arbitrary position $z_0$.   
\end{rem}  

When $\eta=\ka=0$, then a further degeneration occurs. 

\begin{propn}  If $\eta=\ka =0$ and $\la\neq 0$, then 
$$ w= -\Sigma' $$ 
satisfies the first Painlev\'e equation $P_I$ in the form 
\beq\label{p1} 
w''=6w^2-\la z -\frac{1}{2}g_2, 
\eeq 
while if $\eta=\ka =\la= 0$, then the general solution of (\ref{bsig}) is given   in terms of the 
Weierstrass zeta function with invariants $g_2,g_3$ by 
\beq\label{zetafn} 
\Sigma (z) =\zeta (z-z_0;g_2,g_3)+B, 
\eeq 
for $z_0,B$ arbitrary, so $\Sigma'=-\wp (z-z_0; g_2,g_3)$ is an elliptic function of $z$.  
\end{propn}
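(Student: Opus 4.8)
The plan is to treat each degeneration exactly as in the previous proposition, reducing (\ref{bsig}) --- or its differential consequence (\ref{third}) --- to a classical equation whose solutions are already known. Take first $\eta=\kappa=0$ with $\lambda\neq0$. Setting $\eta=\kappa=0$ in (\ref{third}) collapses it to $\Sigma'''+6(\Sigma')^2-\lambda z-\frac12 g_2=0$, and writing $w=-\Sigma'$, so that $w''=-\Sigma'''$ and $(\Sigma')^2=w^2$, this is precisely equation (\ref{p1}); since $\lambda\neq0$, rescaling $z$ and $w$ and shifting to absorb $g_2$ brings it to the standard form $w''=6w^2+z$, so $w$ solves $P_I$. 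The converse, which makes this the exact analogue of Lemma \ref{basic}, follows on multiplying (\ref{third}) (with $\eta=\kappa=0$) by $2\Sigma''$ and integrating once: one recovers (\ref{bsig}), now with $g_3$ appearing as the integration constant, so that every $P_I$ transcendent arises from a solution of (\ref{bsig}) for a suitable $g_3$.

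For $\eta=\kappa=\lambda=0$, equation (\ref{bsig}) itself reduces to $(\Sigma'')^2+4(\Sigma')^3-g_2\Sigma'+g_3=0$, which on putting $y=-\Sigma'$ is exactly the Weierstrass differential equation $(y')^2=4y^3-g_2 y-g_3$; equivalently, differentiating and cancelling the factor $\Sigma''$ produces $y''=6y^2-\frac12 g_2$. Its general solution is $y=\wp(z-z_0;g_2,g_3)$ for arbitrary $z_0$, and integrating $\Sigma'=-y$ together with $\zeta'=-\wp$ gives $\Sigma(z)=\zeta(z-z_0;g_2,g_3)+B$; a direct substitution using $(\wp')^2=4\wp^3-g_2\wp-g_3$ and $\wp''=6\wp^2-\frac12 g_2$ then confirms that this solves (\ref{bsig}). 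Since (\ref{bsig}) is of second order while this family carries exactly the two free constants $z_0,B$, it is the general solution, and $\Sigma'=-\wp(z-z_0;g_2,g_3)$ is elliptic, as claimed.

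Both reductions are one-line substitutions, so I expect no genuine analytic difficulty; the only delicate point is the meaning of ``general'' in the last assertion. Alongside the two-parameter family (\ref{zetafn}), the reduced equation also admits the trivial solutions with $\Sigma$ linear in $z$ --- equivalently $\Sigma'\equiv -e_i$ for a root $e_i$ of $4y^3-g_2 y-g_3$ --- which are exactly the pole-free solutions set aside in the Remark following Lemma \ref{basic}. Once these are excluded, a parameter count (second order, second degree, against the two constants $z_0,B$) confirms that (\ref{zetafn}) is indeed the general solution; one should also keep in mind that on the locus $g_2^3=27g_3^2$ the function $\wp$ degenerates, though (\ref{zetafn}) continues to make sense there in the limiting sense.
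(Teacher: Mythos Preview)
Your argument is correct. The paper's own proof is a one-line citation --- it simply observes that (up to the rescaling $\lambda\to 6\lambda$) this degeneration coincides with the case already treated in \cite{HRZ} --- whereas you supply the direct substitution into (\ref{third}) and (\ref{bsig}) that establishes both claims from scratch. Your treatment is therefore more self-contained; the added care about the trivial linear solutions and the discriminant locus $g_2^3=27g_3^2$ is appropriate and goes slightly beyond what either the paper or the cited reference spells out.
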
 
\begin{proof} Up to replacing $\la\to 6\la$, this coincides with the case considered in \cite{HRZ}.  
\end{proof} 

\section{Tau-function and bilinear equation}  

For the sigma equation in the form (\ref{bsig}), the tau-function
$\tau(z)$ is defined 
by 
\beq\label{taudef}
\Sigma (z) = \frac{d}{dz} \log \tau(z).  
\eeq  
Since $\Sigma$ is meromorphic, with its only singularities being simple poles with residue $+1$, the above formula implies that $\tau(z)$ is holomorphic, but is only defined 
up to overall scaling $\tau\to A\tau$ for an arbitrary non-zero constant $A$.  
By substituting (\ref{taudef}) into (\ref{bsig}), an equation of third order which is homogeneous of degree four in $\tau$ results, that is 
\be \label{quad} 
\begin{array}{ll}
\tau^2(\tau ''')^2 -6\tau\tau '\tau ''\tau ''' +4(\tau ')^3\tau ''' +4\tau (\tau '')^3 -3(\tau ' \tau '')^2  
&\\ -z(\eta z-2\ka)\Big(\tau\tau''-(\tau')^2\Big)^2 +2(\eta z-\ka)\Big(\tau^2\tau'\tau''-\tau(\tau')^3\Big) 
&\\+(2\la z -\eta +g_2)\tau^2(\tau')^2
-(2\la z +g_2)\tau^3\tau''+2\la\tau^3 \tau'+g_3\tau^4 &= 0.  
\end{array}
\ee  
Taylor expansions of (\ref{quad}) around a movable 
zero, which correspond to a movable simple pole in (\ref{bsig}),  take the form 
$$ 
\tau(z)=C_0(z-z_0)+C_1(z-z_0)^2+C_2(z-z_0)^3+\ldots, \qquad C_0\neq 0,  
$$ 
where $z_0$ (the position of the zero) and $C_0,C_1$ are arbitrary, while all subsequent coefficients are determined uniquely in terms of these three parameters. By considering gauge transformations of the form 
\beq\label{gauge} 
\tau(z)\to A\exp(Bz) \tau(z), \qquad A\neq 0,  
\eeq 
the initial coefficient $C_0$ can be set to 1, and $C_1$ can be set to $0$; in that case one can check that the 
next coefficient $C_2$ is also $0$. The overall effect of the transformation (\ref{gauge}) is to send 
$$ 
\Sigma \to \Sigma +B, 
$$
which results in changing the parameters in (\ref{bsig}) and 
(\ref{quad}). 
However, this change does not affect the form of the equation, and thus 
we obtain an alternative version 
of Lemma  \ref{basic}, reformulated in terms of the tau-function. 
\begin{lem}\label{basictau} 
For $\zeta\neq 0$, via translations of the form (\ref{trans}),  there 
is a one-to-one correspondence between solutions of (\ref{sig}) 
with a pole at some $z_0\in\C$,  
and functions  
$$\sigma(z)=\frac{d}{dz} \log \tau(z) +\mu z$$ 
 with a pole at $z=0$, where $\tau (z)$ is the solution of  (\ref{quad})  specified by the  local Taylor expansion 
\beq\label{bsigtay}
\tau(z)=z+ O(z^4). 
\eeq 
\end{lem}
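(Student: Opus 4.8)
The plan is to transport the correspondence established in Lemma~\ref{basic} through the substitution (\ref{taudef}) that relates $\Sigma$ to $\tau$, showing that the extra data carried by the tau-function (the constants $A$ and $B$ in the gauge freedom (\ref{gauge})) can be normalised away in exactly the manner needed to make the statement precise. First I would recall that Lemma~\ref{basic} already gives a bijection between solutions of (\ref{sig}) with a pole at $z_0$ and functions $\sigma(z)=\Sigma(z)+\mu z$, where $\Sigma$ solves (\ref{bsig}) with the normalised Laurent expansion (\ref{bsigexp}). So it suffices to show that the passage $\Sigma \leftrightarrow \tau$ via (\ref{taudef}) sets up a bijection between $\{\Sigma \text{ solving (\ref{bsig}) with } \Sigma(z)=1/z+O(z^2)\}$ and $\{\tau \text{ solving (\ref{quad}) with } \tau(z)=z+O(z^4)\}$.

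The forward direction is the direct computation already indicated in the text: substituting (\ref{taudef}) into (\ref{bsig}) and clearing denominators produces (\ref{quad}), so any $\Sigma$ solving the former gives a $\tau$ solving the latter. The point requiring care is the normalisation of the Taylor expansion. Given $\Sigma(z)=1/z+O(z^2)$, the antiderivative of $\Sigma$ has the form $\log z + (\text{holomorphic vanishing to order }3)$, hence $\tau(z) = C_0\,z\,\exp(\text{holomorphic})$ with the holomorphic part starting at order $z^3$; concretely $\tau(z)=C_0(z+C_2' z^4+\cdots)$ with no $z^2$ or $z^3$ term, because $\Sigma$ has vanishing residue-free constant and linear coefficients. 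Applying the gauge transformation (\ref{gauge}) with a suitable $A$ (and $B=0$, since there is no constant term in $\Sigma$ to remove) fixes $C_0=1$, yielding $\tau(z)=z+O(z^4)$ as required. For the reverse direction, I would run the argument backwards: if $\tau(z)=z+O(z^4)$ solves (\ref{quad}), then $\tau$ is holomorphic and non-vanishing except for the simple zero at $0$, so $\Sigma=(\log\tau)'$ is meromorphic with a simple pole of residue $+1$ at $0$ and, because the $z^2$ and $z^3$ coefficients of $\tau$ vanish, $\Sigma(z)=1/z+O(z^2)$; and dividing (\ref{quad}) by $\tau^4$ recovers (\ref{bsig}) wherever $\tau\neq0$, hence everywhere by analytic continuation. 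Composing these two bijections with the one from Lemma~\ref{basic} gives the claim, with $\sigma(z)=\Sigma(z)+\mu z=(\log\tau)'(z)+\mu z$.

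The main obstacle is making the bookkeeping of the gauge freedom airtight: one must check that the equivalence relation $\tau\sim A\exp(Bz)\tau$ on tau-functions corresponds precisely to the additive freedom $\Sigma\sim\Sigma+B$, that this is exactly what is quotiented out when one imposes $\tau(z)=z+O(z^4)$, and that no solution is lost or double-counted in the process. In particular I would verify that once the pole of $\Sigma$ is pinned at $z=0$ with residue $+1$ and the constant term of $\Sigma$ forced to zero (equivalently $B$ fixed), the only remaining freedom is the scaling $A$, which is killed by demanding the coefficient of $z$ in $\tau$ equal $1$; and conversely that the stated expansion (\ref{bsigtay}) uniquely determines $\tau$ given the five parameters $\eta,\ka,\la,g_2,g_3$, since the recursion for the Taylor coefficients (to be established in Theorem~\ref{main}) determines all higher $C_n$ from the normalisation. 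A minor point to dispatch is the excluded degenerate solutions: the remark following Lemma~\ref{basic} already notes that solutions of (\ref{sig}) linear in $z$ have no pole and are therefore outside the correspondence, so no separate treatment is needed here.
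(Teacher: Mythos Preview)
Your proposal is correct and matches the paper's own approach: the paper presents Lemma~\ref{basictau} not with a separate proof but as an immediate reformulation of Lemma~\ref{basic}, justified by the paragraph preceding it (substituting (\ref{taudef}) into (\ref{bsig}) to obtain (\ref{quad}), then using the gauge freedom (\ref{gauge}) to normalise $C_0=1$, $C_1=0$, after which $C_2=0$ automatically). Your write-up is somewhat more explicit about the bijectivity and the bookkeeping of the gauge freedom, but the underlying argument is identical.
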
 

The degree four equation (\ref{quad}) is somewhat awkward for computing the coefficients in the local expansion 
\begin{equation}\label{tauseries}
\tau(z)=\sum_{n=0}^\infty C_nz^{n+1}
\end{equation}
around a zero at $z=0$. It is much more convenient to take the derivative of (\ref{quad}), so that after removing an overall factor  
one finds the bilinear (degree two) equation 
\be \label{bil} 
D_z^4 \tau\cdot \tau -z(\eta z-2\ka)D_z^2 \tau\cdot \tau +2(\eta z-\ka)\tau\tau'-(2\la z+g_2)\tau^2 =0, 
\ee 
which has been written concisely in terms of 
the Hirota derivative defined by 
\begin{equation}\label{hir}
D_z^n f\cdot g (z) = \left( \frac{d}{dz}-\frac{d}{dz'}\right)^n f(z)g(z') |_{z'=z}. 
\end{equation}
The equation (\ref{bil}) also follows immediately by making the substitution (\ref{taudef}) in (\ref{third}). The quantity 
$g_3$ in (\ref{quad}) also corresponds to a first integral of (\ref{bil}).


In order to describe the expansion of the tau-function around a zero, we 
use the bilinear equation (\ref{bil}), and note that  
the action of the Hirota operators $D_z^2$ and $D_z^4$ on monomials is 
given by   
$$ 
D_z^2 z^j\cdot z^k =a_{j,k}z^{j+k-2}, \quad D_z^4 z^j\cdot z^k =b_{j,k}z^{j+k-4}, 
$$
where the multipliers appearing on the right-hand side are 
$$
a_{j,k} =   2!\sum_{\ell =0}^2 (-1)^\ell \left(\begin{array}{c} j \\ \ell \end{array}\right) \, 
\left(\begin{array}{c} k \\ 2 - \ell \end{array}\right), \quad b_{j,k}=4!\sum_{\ell =0}^4 (-1)^\ell \left(\begin{array}{c} j \\ \ell \end{array}\right) \, 
\left(\begin{array}{c} k \\ 4 - \ell \end{array}\right). 
$$ 
The resulting recursion relation leaves the coefficients $C_0,C_1$ and $C_6$ undetermined. The freedom to chose $C_6$ in (\ref{bil}) corresponds to the value of the first integral $g_3$,   so in order to match 
the term at order $z^7$ arising from (\ref{quad}), the correct value of 
$C_6$ must be inserted in the recursion. 
Before stating the result, it is convenient to define 
the shifted multipliers 
$$ 
\hat{a}_{j,k}=a_{j,k}-2k, \qquad a^*_{jk}=a_{j,k}-k. 
$$ 
\begin{thm} \label{main} 
The coefficients $C_n$ in the expansion (\ref{tauseries}) obey the recursion
\begin{equation}\begin{array}{ll}
n(n^2-1)(n-6)C_n =&-\frac{1}{2}\sum_{j=1}^{n-1}b_{j+1,n+1-j}C_jC_{n-j} 
+\frac{1}{2}\eta\sum_{j=0}^{n-4}\hat{a}_{j+1,n-3-j}C_jC_{n-4-j}\\
&-\ka\sum_{j=0}^{n-3}a^*_{j+1,n-2-j}C_jC_{n-3-j}
+\frac{1}{2}g_2\sum_{j=0}^{n-4}C_jC_{n-4-j}\\ 
&+\lambda\sum_{j=0}^{n-5}C_jC_{n-5-j}. 
\end{array}\end{equation}
To obtain the expansion in the form  (\ref{bsigtay}), 
the free coefficients must be fixed as 
$$C_0=1, \qquad C_1=0, \qquad 
C_6=\frac{1}{5040}\ka^2-\frac{g_3}{840}. $$ 
With the latter choice, each coefficient $C_n$ is a weighted  
homogeneous polynomial  of
total degree $n$ in 
$\mathbb{Q}[\ka, \eta, g_2,\lambda ,g_3]$ 
with weights $3,4,4,5,6$ respectively, so that 
\beq \label{cform} 
C_n = \frac{\mathrm{P}_n (\ka, \eta, g_2,\lambda ,g_3)}{(n+1)!}, 
\eeq 
where $\mathrm{P}_n (\xi^3\ka, \xi^4\eta, \xi^4g_2,\xi^5\lambda ,\xi^6g_3)=\xi^n\mathrm{P}_n (\ka, \eta, g_2,\lambda ,g_3)$
for all $\xi\in\C^*$. 
\end{thm}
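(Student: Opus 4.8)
The plan is to derive the recursion by substituting the series \eqref{tauseries} directly into the bilinear equation \eqref{bil} and collecting the coefficient of $z^{n-4}$. First I would record that, for the monomials $\tau=\sum_j C_j z^{j+1}$, the bilinearity of the Hirota operators gives
\[
D_z^4\tau\cdot\tau=\sum_{m}\Big(\sum_{j}b_{j+1,\,m+1-j}\,C_jC_{m-j}\Big)z^{m-4},\qquad
D_z^2\tau\cdot\tau=\sum_{m}\Big(\sum_{j}a_{j+1,\,m+1-j}\,C_jC_{m-j}\Big)z^{m-2},
\]
and similarly $\tau\tau'=\sum_m\big(\sum_j(m-j+1)C_jC_{m-j}\big)z^{m+1}$ and $\tau^2=\sum_m\big(\sum_jC_jC_{m-j}\big)z^{m+2}$. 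The only subtlety is that the $z$-dependent coefficients $-z(\eta z-2\ka)$ and $2(\eta z-\ka)$ and $-(2\la z+g_2)$ shift the index differently for each term; matching all contributions to the coefficient of $z^{n-4}$ produces the stated identity after one isolates the diagonal $j=0$ and $j=n$ pieces of the $D_z^4$ sum, which together yield the left-hand side $b_{1,n+1}C_0C_n+b_{n+1,1}C_0C_n$. A short computation with the binomial formula for $b_{j,k}$ shows $b_{1,n+1}=b_{n+1,1}=-\tfrac12\,n(n^2-1)(n-6)$ up to the overall factor, which is exactly the coefficient on the left; pulling those two terms out of the sum on the right leaves the inner sum running over $1\le j\le n-1$, and the substitutions $\hat a_{j,k}=a_{j,k}-2k$, $a^*_{jk}=a_{j,k}-k$ absorb the parts of the $z\,D_z^2$ and $\tau\tau'$ terms that come from the extra powers of $z$.

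Next I would address the values of the free coefficients. Because the left-hand multiplier $n(n^2-1)(n-6)$ vanishes precisely at $n=0,1,-1,6$, the recursion does not determine $C_0$, $C_1$, $C_6$ (the value $n=-1$ being vacuous for $n\ge0$); these are the three arbitrary parameters anticipated before Lemma \ref{basictau}. Setting $C_0=1$, $C_1=0$ and checking $n=2,\dots,5$ confirms $C_2=C_3=C_4=C_5=0$, so the expansion has the form \eqref{bsigtay}. For $C_6$ I would not use \eqref{bil} at all — at $n=6$ it is automatically satisfied — but instead return to the undifferentiated degree-four equation \eqref{quad}: substituting the partially-determined series and reading off the coefficient of $z^{7}$ (the first order at which $g_3$ enters, since $g_3\tau^4$ starts at $z^4$ and the lower coefficients are forced) gives one linear equation for $C_6$ in terms of $g_3$ and $\ka$, and solving it yields $C_6=\tfrac{1}{5040}\ka^2-\tfrac{g_3}{840}$. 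The remark that $g_3$ is a first integral of \eqref{bil} is what guarantees consistency: every higher coefficient is then forced, and no further constraints arise.

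Finally, for the weighted-homogeneity claim I would argue by induction on $n$ using the recursion itself. Assign weights $3,4,4,5,6$ to $\ka,\eta,g_2,\la,g_3$ and declare $C_n$ to have weight $n$; the base cases $C_0,\dots,C_6$ are visibly weighted-homogeneous of the right degree (with $C_6=\tfrac{1}{5040}\ka^2-\tfrac{g_3}{840}$ of weight $6$). In the inductive step each term on the right of the recursion has weight: $b_{j+1,n+1-j}C_jC_{n-j}$ has weight $j+(n-j)=n$; $\eta\,\hat a\,C_jC_{n-4-j}$ has weight $4+j+(n-4-j)=n$; $\ka\,a^*\,C_jC_{n-3-j}$ has weight $3+j+(n-3-j)=n$; $g_2\,C_jC_{n-4-j}$ has weight $4+(n-4)=n$; and $\la\,C_jC_{n-5-j}$ has weight $5+(n-5)=n$. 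Dividing by the integer $n(n^2-1)(n-6)$ keeps us in $\Q[\ka,\eta,g_2,\la,g_3]$ and preserves weight, so $C_n$ is weighted-homogeneous of degree $n$; writing $C_n=\mathrm P_n/(n+1)!$ and clearing the obvious factorial denominators coming from the leading coefficient $n(n^2-1)(n-6)\sim n^4$ at each stage is a bookkeeping matter that I would carry out by tracking the product $\prod_{k\le n}k(k^2-1)(k-6)$ against $(n+1)!$. The scaling identity $\mathrm P_n(\xi^3\ka,\dots,\xi^6g_3)=\xi^n\mathrm P_n(\ka,\dots,g_3)$ is just the statement of weighted homogeneity.

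\textbf{Main obstacle.} The principal difficulty is purely computational rather than conceptual: verifying that $b_{1,n+1}+b_{n+1,1}$ reproduces $-\tfrac12 n(n^2-1)(n-6)$ and that the correction terms in $\hat a$ and $a^*$ exactly reconcile the index-shifted $z\,D_z^2$ and $\tau\tau'$ contributions, i.e.\ that no stray terms survive. I expect this to require a careful, but entirely mechanical, manipulation of the binomial sums defining $a_{j,k}$ and $b_{j,k}$; once the polynomial identity $b_{1,n+1}=-\tfrac12 n(n^2-1)(n-6)$ is in hand, everything else follows by bookkeeping. The determination of $C_6$ from \eqref{quad} is the one place where one genuinely needs the degree-four equation and not its bilinear derivative, so I would flag that step explicitly rather than let it pass as routine.
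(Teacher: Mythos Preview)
Your plan is essentially the paper's own argument, which is given implicitly in the paragraph preceding the theorem: substitute the series into the bilinear equation \eqref{bil}, isolate the $j=0,n$ terms of the $D_z^4$ sum to produce the left-hand coefficient, and then fix $C_6$ by going back to the degree-four equation \eqref{quad} (since $g_3$ is the first integral lost on passing to \eqref{bil}). The weighted-homogeneity induction you sketch is the natural completion and is not written out in the paper.

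Two small slips to correct before you carry this out. First, your claim that ``checking $n=2,\dots,5$ confirms $C_2=C_3=C_4=C_5=0$'' is false: only $C_2=0$ is forced, while $C_3=-\ka/24$, $C_4=(2\eta-\tfrac12 g_2)/120$, $C_5=-\la/120$ (these are the polynomials $\mathrm P_3,\mathrm P_4,\mathrm P_5$ listed after the theorem, divided by $(n+1)!$). The normalisation \eqref{bsigtay} is $\tau(z)=z+O(z^4)$, i.e.\ $C_0=1$, $C_1=C_2=0$ only. Second, the binomial computation gives $b_{1,n+1}=n(n^2-1)(n-6)$ exactly (not $-\tfrac12$ of it); the left-hand side arises because the two boundary terms $j=0$ and $j=n$ together contribute $2b_{1,n+1}C_0C_n$, and after dividing the whole equation by $-2$ you move $-n(n^2-1)(n-6)C_n$ across. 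Also note that the power-matching in your first display should read $z^{m-2}$ for $D_z^4\tau\cdot\tau$ and $z^m$ for $D_z^2\tau\cdot\tau$ with your indexing; the relevant coefficient to extract is that of $z^{n-2}$, not $z^{n-4}$.
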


This above result extends the analogous recursion for the Taylor series 
coefficients of the Weierstrass sigma function \cite{EE} and for the 
tau-function of the first Painlev\'e equation \cite{HRZ}. 
We record the first few polynomials $\mathrm{P}_n$ here: 
$$ 
\begin{array}{ll}
\mathrm{P}_0=1, \qquad \,\,\mathrm{P}_1=\mathrm{P}_2=0, &  
\mathrm{P}_3=-\ka,\qquad  \mathrm{P}_4=2\eta-\frac{1}{2}g_2,\\ 
\mathrm{P}_5=-6\la, \quad \mathrm{P}_6=\ka^2-6g_3, &
\mathrm{P}_7 = -\ka (11\eta+g_2), \\
\mathrm{P}_8= 12\eta^2 +6g_2\eta+51\la\ka -\frac{9}{4}g_2^2, 
&
\mathrm{P}_9 = 17\ka^3 -42(\eta+g_2)\la +108 g_3\ka. 
\end{array} 
$$  
Computer calculations up to $\mathrm{P}_{100}$  suggest that, after suitable scaling of the variables 
$g_2,g_3$, these polynomials have integer coefficients. The form of the expression (\ref{cform}) implies that the Taylor series solution of (\ref{quad}) with leading order (\ref{bsigtay})  
can be written as  a multiple sum 
\be\label{tauWeiers} 
\tau(z) = \sum_{j,k,l,m,n\geq 0}A_{j,k,l,m,n} \, \ka^j\eta^k
\Big(\frac{g_2}{2}\Big)^l 
\lambda^m (6g_3)^n \frac{z^{3j+4k + 4l + 5m + 6n+1}}{(3j+4k + 4l + 5m + 6n+1)!}  
\ee 
where $A_{j,k,l,m,n} \in \Q$. 

\begin{conj} 
The series (\ref{tauWeiers}) has
$$
A_{j,k,l,m,n} \in \Z \quad \forall  j,k,l,m,n \geq 0. 
$$ 
\end{conj}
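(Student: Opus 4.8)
A natural strategy is to turn the bilinear recursion of Theorem~\ref{main} into an explicit recursion for the rational numbers $A_{j,k,l,m,n}$ themselves, and then to control the denominators prime by prime. Write $N=3j+4k+4l+5m+6n$ for the weighted degree of a monomial, so that $C_N$ in Theorem~\ref{main} is its weight-$N$ homogeneous part and $C_N=\mathrm{P}_N/(N+1)!$. Reading off the coefficient of $\ka^j\eta^k(g_2/2)^l\la^m(6g_3)^n$ in the recursion, and using the elementary identities $n(n^2-1)/(n+1)!=1/(n-2)!$ and $\binom{j+1}{\ell}/(j+1)!=1/(\ell!\,(j+1-\ell)!)$ to clear all factorials, one arrives at a recursion of the shape
\[(N-6)\,A_{\mathbf v}=\sum\, c\cdot A_{\mathbf v'}A_{\mathbf v''},\qquad c\in\Z,\]
where the sum runs over pairs of multi-indices whose weights are $N,N-4,N-3,N-4,N-5$ — one family for each of the five terms of the bilinear equation~(\ref{bil}) — and $\mathbf v'+\mathbf v''$ is a fixed shift of $\mathbf v$ in each family. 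The rescalings $g_2\mapsto g_2/2$ and $g_3\mapsto 6g_3$ are precisely what makes the coefficients $c$ integral: the Hirota multipliers $b_{j,k}$ and $a_{j,k}$ carry the factors $4!$ and $2!$, which account for all the $2$'s and $3$'s that occur, while $g_3$ enters only through $\mathrm{P}_6=\ka^2-6g_3$ (fixed by hand to encode the first integral), which is what forces the factor $6$. By the remarks after Theorem~\ref{main} this recursion involves only $A$'s of weight strictly less than $N$, so it is amenable to induction on $N$, the base cases being the explicit $\mathrm{P}_0,\dots,\mathrm{P}_6$ listed above; the sublattices $j=k=m=0$ and $j=k=0$ reduce to the already-known elliptic and $P_I$ cases of \cite{EE,HRZ}, which serves as a consistency check rather than a logical input.

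Everything then hinges on one divisibility statement: for every multi-index $\mathbf v$ of weight $N\ne 6$, the integer $R_{\mathbf v}:=\sum c\,A_{\mathbf v'}A_{\mathbf v''}$ must be divisible by $N-6$. This has no formal justification, and it is the real content of the conjecture. Since for $N=6$ the left-hand side vanishes identically, compatibility of the recursion gives $R_{\mathbf v}=0$ at weight $6$, so it is enough to prove that $R_{\mathbf v}\equiv 0\pmod{p^{a}}$ whenever $p$ is prime and $a=v_p(N-6)\ge 1$, i.e. that $R_{\mathbf v}\bmod p^{a}$ depends on $N$ only through $N\bmod p^{a}$. This is plausible because the relevant multipliers are coefficients of $(1-x)^4(1+x)^{N-2}$ and $(1-x)^2(1+x)^{N-4}$ together with low-degree polynomials in $N$, whose reductions modulo prime powers are controlled by the theorems of Lucas and Kummer on the base-$p$ digits of $N$; one would combine these with the inductive hypothesis and with the vanishings $\mathrm{P}_1=\mathrm{P}_2=0$, which rescue exactly those low-index terms of $R_{\mathbf v}$ where the multipliers fail to be visibly divisible. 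Turning this into a congruence that is uniform in $N$ and in $p$, rather than a prime-by-prime verification, is where I expect the main difficulty to lie.

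Two alternative lines are worth developing in parallel. First, one may work with the degree-four equation~(\ref{quad}) in place of~(\ref{bil}): its recursion determines $C_6$ rather than leaving it free, so its leading factor is $n(n^2-1)=6\binom{n+1}{3}$ with \emph{no} stray factor $n-6$, and after clearing factorials the analogue of $R_{\mathbf v}$ becomes a sum of \emph{four}-fold products against multipliers built from $n(n^2-1)$ and products of binomial coefficients; one would then only have to verify the cleaner divisibility $n(n^2-1)\mid(\text{multiplier})$, a consecutive-integers identity with no problematic linear factor, at the cost of more elaborate combinatorics in the four-fold sums. Second, and more structurally, one could seek to realise $\tau(z)$ as a degeneration of a tau-function with a manifestly integral combinatorial expansion — a point of the Sato Grassmannian with integer Pl\"ucker coordinates, or an irregular $c=1$ conformal block with Nekrasov-type coefficients — in which $\ka,\eta,g_2,\la,g_3$ play the role of monomial-weighted higher times $t_3,t_4,t_4,t_5,t_6$ and the non-autonomous coefficients $z(\eta z-2\ka)$ and $2\la z+g_2$ of~(\ref{bil}) are imposed as Virasoro constraints; integrality of the $A_{j,k,l,m,n}$ would then be automatic. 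Identifying such a presentation — or even showing that the relevant isomonodromic tau-function admits one — is itself the hard part, and its apparent unavailability is presumably why the statement has been left as a conjecture.
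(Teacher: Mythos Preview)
The statement you are addressing is labelled a \emph{Conjecture} in the paper, and the paper offers no proof: the only support given is the computer calculation of $\mathrm{P}_n$ for $n\le 100$, together with the remark that in the elliptic sublattice $j=k=m=0$ Onishi has proved the weaker statement $2^l24^nA_{0,0,l,0,n}\in\Z$, and that numerical evidence for the $P_I$ sublattice $j=k=0$ was found in \cite{HRZ}. So there is no ``paper's own proof'' to compare against.

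Your write-up is, accordingly, not a proof but a programme, and you are candid about this: you isolate the recursion coming from (\ref{bil}), observe that after the rescalings the structural constants become integral, and then state plainly that the divisibility $(N-6)\mid R_{\mathbf v}$ ``has no formal justification, and it is the real content of the conjecture.'' That diagnosis is correct, and it is exactly the obstruction the paper leaves open. Your two alternative routes --- working with the quartic equation (\ref{quad}) to trade the awkward linear factor $n-6$ for the more tractable $n(n^2-1)$, and seeking a Grassmannian or conformal-block realisation with built-in integrality --- are reasonable directions, but neither is carried through, and the second in particular is a substantial open problem in its own right.

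In short: there is no gap to name beyond the one you already name yourself. What you have written is a thoughtful outline of how one might attack the conjecture, not a proof of it, and the paper does not claim one either.
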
 
\begin{rem}
In the case $\ka=\eta=\la=0$, Weierstrass \cite{w} considered the series 
 for the elliptic sigma function in the form (\ref{tauWeiers}), and 
Onishi proved that $2^l24^n A_{0 ,0,l,0,n}\in\Z$ for all $l,n$ \cite{onishi}, 
while in \cite{HRZ} we already found considerable numerical evidence to 
suggest that $A_{0,0,l,m,n}\in \Z$. 
\end{rem} 

The tau-function transforms in a very specific way when it is expanded around 
another zero, at a location $\Omega\neq 0$. 
\begin{propn} Let $\tau(z)=\tau (z; \eta,\ka , \la , g_2,g_3)$ denote the solution 
of (\ref{quad}) having the Taylor expansion  (\ref{bsigtay}) around $z=0$, and suppose that this function also vanishes at $z=\Omega\neq 0$. 
Then 
\beq\label{taddn} 
\tau (z+\Omega ; \eta,\ka , \la , g_2,g_3) 
=A \exp \Big(Bz +\frac{1}{2}\tilde{\mu}z^2\Big) 
	\tau (z; \eta,\tilde{\ka} , \tilde{\la} , \tilde{g}_2,\tilde{g}_3), 
\eeq 
where 
\beq\label{abmk} 
A=\tau'(\Omega), \qquad B = \frac{\tau ''(\Omega)}{2\tau'(\Omega)}, 
\qquad \tilde{\mu}=\frac{1}{12}\Omega(\Omega\eta-2\ka), 
\qquad 
\tilde{\ka}=\ka-\Omega\eta , 
\eeq 
$$ 
\tilde{\la} = \la-B\eta -\tilde{\ka}\tilde{\mu}, \qquad  \tilde{g}_2=g_2+12\tilde{\mu}^2+2\Omega\la+2B\tilde{\ka}, \qquad  
\tilde{g}_3=g_3-\tilde{g}_2\tilde{\mu}+4\tilde{\mu}^3-B^2\eta+2B\la. 
$$  
\end{propn}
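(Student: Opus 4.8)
The strategy is to use the translation freedom already exploited in Lemma~\ref{basic}: shifting the independent variable to a second zero of $\tau$ must again produce a tau-function of the \emph{same} bilinear equation~(\ref{bil}), but with altered parameters. Concretely, set $\hat\tau(z) = \tau(z+\Omega)$. Since $\tau$ solves (\ref{quad}) with parameters $(\eta,\ka,\la,g_2,g_3)$, the function $\hat\Sigma(z) = \frac{d}{dz}\log\hat\tau(z) = \Sigma(z+\Omega)$ satisfies the shifted version of the third-order equation~(\ref{third}) obtained by substituting $z\to z+\Omega$. The plan is to expand that substituted equation, complete the square in $z$ wherever a term like $z(\eta z - 2\ka)$ or $\la z$ appears, and thereby rewrite it in exactly the form~(\ref{third}) for a new dependent variable $\tilde\Sigma(z) = \hat\Sigma(z) - \tilde\mu z$ (the quadratic shift being forced precisely as in~(\ref{bigsig})), with new parameters $\tilde\eta,\tilde\ka,\tilde\la$ and constant term. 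Matching coefficients of $z^2$, $z^1$, $z^0$ in the rewritten equation against~(\ref{third}) gives $\tilde\eta = \eta$, $\tilde\ka = \ka - \Omega\eta$, and an expression for $\tilde\la$; the $g_2$-shift $\tilde g_2$ then comes from the residual constant, and $\tilde g_3$ from the first integral of~(\ref{bil}) (equivalently, from matching the order-$z^7$ data, as noted after Theorem~\ref{main}).

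The next step is to pin down the prefactor. The relation $\tilde\Sigma(z) = \frac{d}{dz}\log\hat\tau(z) - \tilde\mu z$ integrates to $\hat\tau(z) = (\text{const})\cdot\exp\!\big(\tfrac12\tilde\mu z^2\big)\,\tilde\tau(z)$ up to a further exponential $\exp(Bz)$ coming from the additive-constant ambiguity in $\Sigma$ (the gauge~(\ref{gauge})). To fix $A$, $B$ and $\tilde\mu$ explicitly I would compare Taylor coefficients at $z=0$ on both sides of~(\ref{taddn}). Because $\tau$ vanishes at $z=\Omega$, we have $\hat\tau(0)=0$, $\hat\tau'(0)=\tau'(\Omega)$, $\hat\tau''(0)=\tau''(\Omega)$; on the right-hand side, $\tilde\tau(z) = z + O(z^4)$, so $\big(A e^{Bz+\frac12\tilde\mu z^2}\tilde\tau(z)\big)$ has the expansion $Az + ABz^2 + \cdots$. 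Matching the linear term gives $A = \tau'(\Omega)$, the quadratic term gives $AB = \tfrac12\tau''(\Omega)$, hence $B = \tau''(\Omega)/(2\tau'(\Omega))$, and $\tilde\mu$ is read off from the cubic/quartic coefficients together with the already-derived parameter relations. Finally one checks that, with these values of $A,B,\tilde\mu$, the right-hand side of~(\ref{taddn}) is \emph{exactly} the canonically normalized solution $\tilde\tau(z) = z + O(z^4)$ of~(\ref{quad}) with the new parameters --- this is what forces the particular combinations $\tilde\la = \la - B\eta - \tilde\ka\tilde\mu$, $\tilde g_2 = g_2 + 12\tilde\mu^2 + 2\Omega\la + 2B\tilde\ka$, $\tilde g_3 = g_3 - \tilde g_2\tilde\mu + 4\tilde\mu^3 - B^2\eta + 2B\la$ quoted in the statement.

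The main obstacle is purely bookkeeping: correctly tracking how the quadratic shift $\exp(\tfrac12\tilde\mu z^2)$ interacts with the gauge shift $\exp(Bz)$ when one passes back and forth between $\tau$, $\Sigma$, and the $z\mapsto z+\Omega$ translation, so that the cross-terms (the $B\eta$ in $\tilde\la$, the $B^2\eta$ and $2B\la$ in $\tilde g_3$, the $2B\tilde\ka$ in $\tilde g_2$) come out with the right signs and coefficients. A clean way to organize this is to observe that applying the translation $z\mapsto z+\Omega$ and then the normalizing gauge transformation is the composition of two operations each already analyzed: the translation is handled by the coefficient-matching in~(\ref{third}) above, while the gauge transformation's effect on $(\eta,\ka,\la,g_2,g_3)$ is exactly the $B$-dependence recorded implicitly in the discussion around~(\ref{gauge}) (it sends $\Sigma\to\Sigma+B$, equivalently it is the $\Omega=0$ analogue with $\tilde\mu=0$). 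Composing the two parameter maps, in the correct order, yields the stated formulas; verifying that the composite indeed lands on the normalization $\tilde\tau = z + O(z^4)$ (in particular that the would-be $C_1,C_2$ coefficients vanish) is the one nontrivial consistency check, and it follows from the fact --- already used in deriving Theorem~\ref{main} --- that the bilinear equation automatically forces $C_1 = C_2 = 0$ once $C_0=1$ and the $\exp(Bz)$ gauge is used to kill $C_1$.
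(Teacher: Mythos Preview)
Your plan is correct and follows essentially the same route as the paper's proof: shift $z\mapsto z+\Omega$, pass to the logarithmic derivative (the paper works with (\ref{bsig}) rather than (\ref{third}), but this is immaterial), normalize by the linear-plus-constant shift $\tilde\Sigma=\hat\Sigma-B-\tilde\mu z$ to restore the canonical form, then read off $A$, $B$ and the consistency relation for $\tilde\mu$ from the first few Taylor coefficients at $z=0$. The only cosmetic difference is that you factor the normalization into ``translate, then gauge'' and compose the two parameter maps, whereas the paper does both shifts in one go via (\ref{sigrel}); either way the bookkeeping you flag as the main obstacle is exactly what produces the stated formulas for $\tilde\lambda$, $\tilde g_2$, $\tilde g_3$.
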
 
\begin{proof}
Upon replacing $z\to z+\Omega$ in (\ref{quad}), 
and introducing 
$$\tilde{\si}(z)=\frac{d}{dz}\log\tau (z+\Omega), $$ 
we see that $\tilde{\si}$ satisfies an  equation of the general form 
(\ref{sig}), and has a pole at $z=0$ because $\tau(\Omega)=0$.  
If we now set 
\beq\label{sigrel} 
\tilde{\si}(z) = 
\tilde{\Sigma}(z) +B+\tilde{\mu}z,  
\eeq
with $\tilde{\mu}$ given by the expression in (\ref{abmk}), 
then  for any  choice of $B$, $\tilde{\Sigma}(z) $ satisfies 
an equation of the canonical form (\ref{bsig}), but with different coefficients $\tilde {\ka}, \tilde{\la}, \tilde{g}_2, \tilde{g}_3$. Now we further require that 
$$ 
\tilde{\Sigma}(z) =\frac{d}{dz}\log \tilde{\tau}(z), 
$$ 
where $\tilde{\tau}$ has the Taylor expansion  (\ref{bsigtay}) around $z=0$. By integrating both sides of (\ref{sigrel}) and exponentiating, we 
see that 
$$
\tau (z+\Omega) = A \exp \Big(Bz +\frac{1}{2}\tilde{\mu}z^2\Big) 
\tilde{\tau}(z), 
$$ 
for some $A\neq 0$. 
By performing a Taylor expansion on each side of the above relation up to terms of order $z^3$ , we obtain the expressions for $A$ and $B$ as in 
  (\ref{abmk}), as well as the equation 
$$ 
\tilde{\mu}=\frac{\tau'''(\Omega)}{3\tau'(\Omega)} 
- \frac{\tau''(\Omega)^2}{4\tau'(\Omega)^2}. 
$$
The latter formula  
 can be seen to be consistent with the previous expression  for 
$\tilde{\mu}$ by setting $z=\Omega$ in (\ref{quad}).
Hence $\tilde{\tau}(z)$ satisfies  the same equation (\ref{quad}) 
but with parameters $\tilde {\ka}, \tilde{\la}, \tilde{g}_2, \tilde{g}_3$ 
found from the equation corresponding equation  (\ref{bsig}) for     $\tilde{\Sigma}(z)$. 
\end{proof} 
\begin{rem} 
The  expression (\ref{taddn}) is a generalization of the classical 
formula for transformation of the 
Weierstrass sigma function under shifting by a period (see e.g. $\S20.421$ in \cite{WW}). 
\end{rem} 

\section{Numerical example: poles in $P_{XXXIV}$}

\begin{figure} \centering
\includegraphics[width=10cm,height=10cm,keepaspectratio]{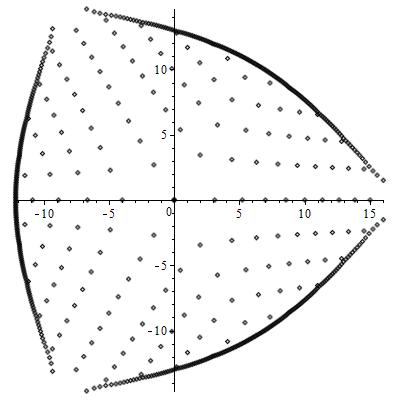}
\caption{\small{Approximation to the poles of the symmetrical solution of (\ref{sp34}).}}
\label{triangle}
\end{figure}
The numerical evaluation of Painlev\'e transcendents and the structure of their pole fields is a very active research area (see e.g. \cite{F1}-\cite{F3} and references therein). The recursion relation in Theorem \ref{main} is extremely convenient for 
computing numerical approximations to the tau-function close to the origin, by truncating the Taylor series for $\tau(z)$. The roots of the  polynomials obtained by truncation provide approximations to the zeros of the tau-function lying near to $z=0$, or equivalently the positions of the poles of the sigma equation. 

As an example, we consider the case of parameters 
\beq\label{sparams} \eta=\la=g_2=g_3=0, \qquad \ka =1, \eeq
for which the function 
$\tau(z)=\tau(z;0,1,0,0,0)$ is such that 
\beq\label{vtau} 
v(z)=\frac{d^2}{dz^2}\log\tau(z) 
\eeq 
satisfies the $P_{XXXIV}$ equation in the canonical form 
\beq\label{sp34} 
vv''-\frac{1}{2}(v')^2+2v^3 +z  v^2 =0, 
\eeq 
with parameter $\ell =0$, 
while from 
(\ref{p2sub}) we have that 
$$ 
u= \frac{1}{2} \frac{d}{dz}\log v 
$$ 
satisfies  $P_{II}$ in the form 
\beq\label{sp2} 
u''=2u^3+z u-\frac{1}{2}. 
\eeq

For the  parameter values (\ref{sparams}), the equation (\ref{bsig}) admits a trivial solution $\Sigma=\,$const, giving $v=0$, but we have neglected such solutions here, by considering the generic situation where $\Sigma$ has poles (cf. Lemma \ref{basic}). (In fact, setting $v=0$ in (\ref{p34sub}) 
yields a Riccati equation, corresponding to the special case that (\ref{sp2}) is solved in Airy functions.) 
The tau-function  $\tau(z)=\tau(z;0,1,0,0,0)$ given by 
the Taylor series defined in Theorem \ref{main} is such that the 
expansion 
(\ref{tauWeiers}) takes the special form 
\beq\label{cubes} 
\tau(z) = \sum_{j=0}^\infty \frac{\hat{A}_j \,z^{3j+1}}{(3j+1)!}, 
\qquad \hat{A}_j =A_{j,0,0,0,0}, 
\eeq  
which is invariant under the order 3 symmetry 
\beq\label{o3} 
z\to \om z, \qquad \tau\to \om^{-1}z, \qquad \om=\exp(2\pi \mathrm{i}/3). 
\eeq 
Hence the zeros of $\tau$ have the same symmetry: if $\Omega\neq 0$ is a zero of $\tau$, then so are $\om\Omega$ and $\om^2\Omega$. These 
zeros of $\tau$ are 
the simple poles of $\Sigma$, and the double poles of $v$, i.e. the particular solution of (\ref{sp34}) 
defined by (\ref{vtau}).  Similarly, the associated function $u$ that satisfies the case (\ref{sp2}) of $P_{II}$ has simple poles with residue $-1$ at these 
same positions, as well as simple poles with residue $+1$ at the places where $v$ vanishes, all of them symmetrically placed on triangles centred at $0$.   

For illustration, in Figure \ref{triangle} we have plotted the approximate positions of some of the zeros of $\tau$ (or the equivalently the poles of $\Sigma$ and $v$). To begin with, the first 201 non-zero terms of the series (\ref{cubes})  were found.  The 
first few coefficients have prime factorizations 
$$ \begin{array}{l} 
\hat{A}_0=1, \hat{A}_1=-1, \hat{A}_2=1,  \hat{A}_3=17,
\hat{A}_4=-557, \hat{A}_5=59\cdot 349,  \hat{A}_6= -1017719, \\  
\hat{A}_7 = 5\cdot7^2\cdot 59\cdot 4391, \hat{A}_8= -5\cdot 13\cdot 131\cdot 550439, 
\hat{A}_9=5^2\cdot 7 \cdot 2224640081, \\ 
\hat{A}_{10}=-5^2\cdot 570919\cdot 2406689, 
\hat{A}_{11}=5^2\cdot 41\cdot 61\cdot46043405509,  \ldots
\end{array} 
$$ 
and it appears to be the case that 
$$\hat{A}_j\equiv 0\pmod{5}  \quad \forall j\geq 7, \qquad 
 \hat{A}_j\equiv 0\pmod{7}  \quad \forall j\geq 14.
$$  
To prove either of these two statements seems not to be simple. However from the bilinear equation (\ref{bil}) it follows that the coefficients $\hat{A}_j$ are determined by the quadratic recurrence
\begin{equation*}\begin{array}{ll}
(n+1)\hat{A}_{n+3} =&\frac{9n^4+18n^3-85n^2-246n-144}{8}A_{n+2}+\\
&-\frac{1}{3}\sum_{j=0}^{n}\frac{(3n+7)!}{(3n-3j+4)!(3j+4)!}a_{3n-\frac{3}{2}j+6,\frac{9}{2}j+6}\hat{A}_{n-j+1}\hat{A}_{j+1}\\
&+\frac{1}{6}\sum_{j=1}^{n}\frac{(3n+7)!}{(3n-3j+7)!(3j+4)!}b_{3(n-j)+7,3j+4}\hat{A}_{n-j+2}\hat{A}_{j+1},
\end{array}\end{equation*}
subject to the initial conditions $\hat{A}_{0}=1$, $\hat{A}_{1}=-1$ and $\hat{A}_{2}=1$. The values of  $a_{j,k}$ and $b_{j,k}$ are defined by the action of the Hirota operators $D^{2}$ and $D^4$ on polynomials (see before Theorem \ref{main}). \\        
Given these coefficients, we took the polynomial 
$$ 
{\cal P}_{601}(z) =\sum_{j=0}^{200} \frac{\hat{A}_j \,z^{3j+1}}{(3j+1)!},
$$ 
and calculated its roots numerically in order to produce the figure. By comparing the values of the roots with those of the successive approximations ${\cal P}_{20}$,  ${\cal P}_{40}$, ${\cal P}_{60}$, ${\cal P}_{80}$,  etc. we were able to confirm that the values of the zeros    closest to $0$ were converging to a high degree of accuracy. For instance, the non-zero roots closest to the origin lie at $\Omega_1,\om\Omega_1,\om^2\Omega_1$, and the next closest roots are 
at   $\Omega_2,\om\Omega_2,\om^2\Omega_2$, 
where   
$$ 
\Omega_1\approx 3.10938452954168950042, 
\quad  
\Omega_2\approx-3.97992802289816587870,  
$$ 
to 20 decimal places. The largest roots of the polynomial, which can be seen to coalesce on the boundary of the figure, are 
numerical artefacts; they do not provide good approximations to the zeros of the tau-function.   

\begin{rem} Due to the homogeneity of the parameters $\ka$, $g_3$, all of the tau-functions $\tau (z;0,\ka , 0,0,g_3)$ admit the symmetry (\ref{o3}).   
\end{rem} 
\begin{rem} Non-polynomial rational solutions of the sigma equation are also included in the formulation of Lemma \ref{basic}. For example, 
 for the  parameter values 
$$ \eta=\la=g_2=0, \qquad \ka =1, \qquad g_3=-\frac{9}{16},  $$ 
the equation (\ref{bsig}) has the rational solution 
$$ 
\Sigma(z) = \frac{1}{z}-\frac{z^2}{8}, 
$$ 
corresponding to the tau-function 
$$ 
\tau(z; 0,1,0,0,-9/16)=z\exp(-z^3/24),  
$$ 
which             
illustrates the symmetry   (\ref{o3}) explicitly. 
This corresponds to 
$$ 
v=-\frac{2}{z^2}-\frac{z}{2}, \qquad u=-\frac{1}{z}  
$$ 
which are rational  solutions   of the $P_{XXXIV}$ equation (\ref{p34}) and the $P_{II}$ equation (\ref{p2}), respectively,  with 
$\ka =1$, $\mu=0$, $\ell=3/2$.   
\end{rem}

\section{Conclusions} 

Our analysis shows that the ``shifted'' sigma equation for $P_{IV}$, given by (\ref{bsig}), is a fundamental 
object which contains not only the general solution of $P_{IV}$,  but also that of $P_{II}$, $P_I$, and the Weierstrass 
$\wp$ function. Although the connection between Painlev\'e transcendents and elliptic functions 
 has a long history 
at the level of 
asymptotic expansions \cite{Boutroux}, the results presented here show that from the viewpoint of the sigma function 
the Painlev\'e transcendents are multi-parameter extensions of elliptic functions.  
Furthermore, although there is a coalescence cascade $P_{IV}\to P_{II}\to P_I$, this requires taking asymptotic limits of 
both the dependent and independent variables \cite{Ince}, whereas at the level of the solution of (\ref{bsig}) one has 
$ P_{IV}\supset P_{II}\supset P_I$, with the inclusion denoting that a parameter has been set to zero. It would be interesting to see if this approach can be extended to the sigma function of $P_{VI}$, in which case all the other Painlev\'e equations would be included as special cases. 

In future work we propose to consider Mittag-Leffler expansions of the solutions of (\ref{bsig}), and the asymptotic behaviour of the coefficients in Laurent expansions for the solutions of the sigma equation, as well as the corresponding Painlev\'e equations. It would also be good to obtain precise a priori bounds on the growth of the polynomials $\mathrm{P}_n$ appearing in Theorem \ref{main}, as this would yield an independent proof that the tau-function is holomorphic (hence  providing yet another proof of the Painlev\'e property for $P_I$, $P_{II}$ and $P_{IV}$; 
cf. \cite{Steinmetz1}). The arithmetic properties of 
the coefficients $A_{j,k,l,m,n}$ in (\ref{tauWeiers}) are also worthy of further study.

\noindent
{\bf Acknowledgements:} ANWH is supported by EPSRC fellowship  EP/M004333/1. FZ wishes to acknowledge the
financial support of the GNFM-INdAM, SISSA (Trieste) and CRM Ennio de Giorgi (Pisa) for participation in the workshop 
``Asymptotic and computational aspects of complex differential equations,'' held in Pisa 
from $13^{th}$-$17^{th}$ February 2017. Both authors are grateful to the organisers of the 
LMS-EPSRC Durham Symposium on Geometric and Algebraic Aspects of Integrability, $25^{th}$ July - $4^{th}$ August 2016, which gave us an opportunity to renew our collaboration.

\end{document}